\newcommand{\cU}{\mathcal{U}}
\newcommand{\cQ}{\mathcal{Q}}
\newcommand{\cR}{\mathcal{R}}
\newcommand{\bC}{\mathbb{C}}
\newcommand{\bR}{\mathbb{R}}
\newcommand{\bF}{\mathbb{F}}
\newcommand{\bN}{\mathbb{N}}
\DeclareMathOperator{\Met}{Met}
\DeclareMathOperator{\Mod}{Mod}
\newcommand{\Cstar}{\ensuremath{\mathrm{C}^*}}
\newcommand{\Wstar}{\ensuremath{\mathrm{W}^*}}
\newcommand\diff{\mathop{}\!\mathrm{d}}
\newcommand{\norm}[1]{{\left\lVert #1\right\rVert}}
\newcommand{\ra}{\ensuremath{\rightarrow}}
\theoremstyle{plain}
\newtheorem{thm}{Theorem}[section]
\newtheorem*{thm*}{Theorem}
\newtheorem{lem}[thm]{Lemma}
\theoremstyle{definition}
\newtheorem{defn}[thm]{Definition}
\newtheorem*{defn*}{Definition}
\newtheorem{remark}[thm]{Remark}
\newtheorem*{question*}{Question}
\title{Free Independence is not Definable}
\author{William Boulanger, Jakub Curda, Emma Harvey, Yizhi Li, Jennifer Pi}
\begin{document}

\begin{abstract}
    Free independence is an important tool for studying the structure of operator algebras. It is natural to ask from the model-theoretic standpoint whether free independence is captured well in first-order model theory via the notion of a definable set. We prove that pairs of freely independent elements do not form a definable set in the sense of continuous model theory, relative to the theory of both C$^*$-probability spaces and tracial von Neumann algebras.
\end{abstract}

\maketitle

\section{Introduction}
    The notion of a definable set is one of the central objects of study in classical model theory, often used in applications to other areas of mathematics (see for example \cite{PillayModelTheory}). 
    Generalizing the notion of a definable set to the setting of continuous model theory requires careful adjustment for the basic theory to transfer to the continuous setting, and the analysis of definable sets even in basic metric structures is far from complete. Previous works have focused on definable sets and functions in the Urysohn sphere and operators on Hilbert spaces, as well as the crucial notion of spectral gap in tracial von Neumann algebras \cite{goldbringDefinableHilbert, goldbringDefinableUrysohn, goldbring2023spectral}. Additionally, a large number of basic objects of study in the theory of C$^*$-algebras are investigated through the lens of definability in \cite[Chapter~3]{farah2021model}, and most recently minimal and maximal tensor norms were shown to be non-definable \cite{goldbring2025definability-tensornorms}. We continue this broad line of work by investigating the definability of the critical notion of free independence in both C$^*$-probability spaces and tracial von Neumann algebras.

    Free independence is a central definition in the basic theory of free probability, introduced by Voiculescu in the 1980s. This theory has subsequently found numerous applications for understanding the structure of both von Neumann algebras and C$^*$-algebras. 
    Indeed, the best-known results on the question of isomorphism of the free group factors relies heavily on free probability \cite{radulescu1992fundamental, dykema1994interpolated}. It has been also used to demonstrate structural properties of the corresponding C$^*$-algebras \cite{haagerup2005new}.
    Recently, freeness has continued to play a role in C$^*$-algebra theory, contributing to a novel approach for demonstrating that certain algebras have nice regularity properties \cite{robert, amrutam2025strict}.

    In this paper, we demonstrate that freeness is not definable, in the sense that the set of all pairs of elements $(x,y)$ which generate free subalgebras within a C$^*$-probability space (or a tracial von Neumann algebra) is not definable. 
    The result follows from the characterisation of definable sets in terms of ultraproducts, and some important tools from operator algebras involving free complementation in ultrapowers. Crucially, we use Popa's asymptotic freeness \cite{popa} and Connes' characterization of amenable factors \cite{connes1976classification}.

\section{Definable Sets in Continuous Logic}
For completeness, we set out the basics of definable sets in continuous logic, and refer the reader to the exposition in \cite[Section~2]{goldbring2023spectral} and \cite[Section~8]{hart2023introduction} for further information.
For the benefit of the reader not versed in model theory, we remark that understanding the proofs that follow only relies upon the characterization of definable sets given by ultraproducts (see equation (\ref{eq:lift}), where the definable set $X$ is defined as pairs of elements which are freely independent).

Definable sets are broadly those subsets that can be quantified over within a logical framework. For example, if a set $S$ is defined by a formula $\varphi$, then it is easy to express the statement ``for all elements $x$ in $S$,..." simply by writing ``for all $x$, $\varphi(x)$ implies..."\footnote{We are describing the case of classical (discrete) model theory here to give the basic flavour of definability. In the continuous setting, not every set defined by a formula is definable, see \cite[Proposition 3.2.8]{farah2021model}.}. 
Examples of such sets in unital C$^*$-algebras include the sets of self-adjoint elements, unitaries, and projections \cite[Example 3.2.7]{farah2021model}.
Definable sets broaden this notion beyond just those sets exactly defined by formulas.

For the purposes of this paper, we fix $T$ to be the theory of C$^*$-probability spaces or of tracial von Neumann algebras. Let $\Mod(T)$ denote the category of all models of $T$ (i.e. the category of C$^*$-probability spaces or tracial von Neumann algebras respectively). 
Throughout this section we write $d(x,y) = \norm{x - y}$ if we are considering C$^*$-algebras (and $d(x,y) = \norm{x-y}_2$ if we are considering tracial von Neumann algebras).

A $T$-functor is a functor $X\colon \Mod(T) \rightarrow \Met$, where $\Met$ is the category of metric spaces with isometric embeddings as morphisms. Additionally, for some $n \geq 1$, we require that $X(M)$ is a closed subspace of $(M_1)^n$ for each model $M$ of $T$ , where $M_1$ is the norm unit ball\footnote{Formally, $X(M)$ is a closed subspace of an arbitrary tuple of sorts evaluated in $M$, but for simplicity here we restrict always to the unit ball of $M$.} of $M$.  We also require that for any elementary map $\rho: M \rightarrow N$ in $\Mod(T)$, $X(\rho)$ is the restriction of $\rho$ to $X(M)$. 

\begin{defn}
    Suppose $X$ is a $T$-functor with $X(M)$ a closed subspace $(M_1)^n$ of $M$. We say that $X$ is a \emph{$T$-definable set} if 
    for every $\epsilon > 0$, there is a $T$-formula $\phi(\overline{x})$ and $\delta > 0$ such that, for all models $M$ of $T$:
    \begin{itemize}
        \item $X(M) \subseteq Z(\phi^M)$, where $Z(\phi^M)$ denotes the zero-set in $M$ of the formula\footnote{Here we mean usual formulas in continuous metric logic, although we could allow for definable predicates as well; see \cite[Section 3.1]{farah2021model}.} $\phi$, and
        \item For all $a = (a_i)_{i =1}^n \in M^n$, if $\phi^M(\overline{a}) < \delta$,
        \[
            \inf_{\overline{x} \in X(M)} \max_{1 \leq i \leq n} \, d(a_i, x_i) \leq \epsilon 
        \]
    \end{itemize}
\end{defn}

This notion of a definable set ensures that:
\begin{itemize}
    \item $X(M)$ is approximated by formulas $\phi$ in a uniform way which does not rely upon the choice of underlying operator algebra $M$, and
    \item each of the approximating formulas $\phi$ satisfies an ``almost-near" property: when $\phi(\overline{a})$ is small, then $\overline{a}$ can be perturbed slightly to something which actually satisfies the property defined by $X(\cdot)$.
\end{itemize}
In particular, it is not difficult to see that the second item above ensures that definable sets allow lifts from ultraproducts (see also \cite[Theorem~8.2]{hart2023introduction}): 
\begin{equation}
    \label{eq:lift}
    X(N) = \prod_{\cU} X(M_i), \quad \text{where $N = \prod_{\cU}M_i$}\,.
\end{equation}

\section{Non-definability of Freeness}

We wish to consider the notion of free independence, which is defined as follows. Recall that a noncommutative probability space is a unital algebra $A$ equipped with a state $\phi$.
\begin{defn}
    Let $(A, \phi)$ be any noncommutative probability space. A family of unital subalgebras $(A_i)_{i \in I}$ is called \textit{free} if the value of $\phi$ on alternating centred words is always zero. More formally, $\phi(a_1 a_2 \cdots a_n) = 0$ whenever 
    $a_j \in A_{i_j}$ with indices satisfying $i_j \neq i_{j+1}$ for all $j$ and $\phi(a_{i_j}) = 0$ for all $j$.

    We say that two elements $a, b \in A$ are freely independent (or free) if they generate free subalgebras $\mathbb{C}[a]$, $\bC[b]$.
\end{defn}
Free independence in particular ensures that the value of $\phi$ on arbitrary words in the algebras $(A_i)$ is completely determined by the restriction to $A_i$ and the freeness condition. 

In what follows, we consider the $T$-functor
\begin{equation}
\label{eq:defX}
    X(M)=\{(x,y)\in (M_1)^2: x \text{ and }y\text{ are free}\}\,,
\end{equation}
which identifies pairs of free elements in a given algebra $M$. 

In order to prove that $X$ is not definable, we exhibit a C$^*$-probability space (resp. tracial von Neumann algebra) for which (\ref{eq:lift}) does not hold. 
We use the fact that in a tracial von Neumann algebra, freeness of $x$ and $y$ extends to freeness of the von Neumann algebras they generate, W$^*(x)$ and W$^*(y)$ (see for example \cite[Proposition~2.5.6]{voiculescu1992free}). 
Thus, two freely independent elements $x,y $ in a tracial von Neumann algebra $M$ generate the free product subalgebra $\Wstar(x) * \Wstar(y) \subseteq M$.

We provide both proofs here in an elementary fashion; however, it should be noted that the C$^*$-algebraic result follows from the von Neumann algebraic result because the unique trace of a UHF algebra is a definable predicate in its theory, see \cite[Theorem 3.5.5]{farah2021model}.

\subsection{Proof for \Cstar-probability spaces}
\label{sec:Cstar}
Throughout this section and the next, we write $A_\cU$ for the C$^*$-algebraic ultraproduct of a C$^*$-algebra $A$, and $M^\cU$ for the von Neumann-algebraic ultraproduct of a von Neumann algebra $M$.

The proof proceeds by contradiction. Assuming freeness is definable, we produce an embedding of a non-amenable algebra into the hyperfinite II$_1$ factor $\cR$.

Since the universal UHF algebra $\cQ$ is selfless (\cite[Theorem 5.2]{robert}), we may choose $x_\infty\in(\cQ)_1$ self-adjoint with spectrum $\sigma(x_\infty)$ consisting of three points and a self-adjoint element $y\in(\cQ_\cU)_1\setminus\bC$ free from the constant sequence $x=[x_\infty,\dots]\in \cQ_\cU$. Now suppose for a contradiction that freeness was definable, so that $X(\mathcal{Q}_\cU) = X(\mathcal{Q})^\cU$, and so take two representative sequences $(x_n)\subset \cQ$ and $(y_n)\subset\cQ$ such that $x_n$ and $y_n$ are free, and $[x_n]_\cU=x$ and $[y_n]_\cU=y$ inside $\cQ_\cU$. We may further assume that $x_n,y_n$ are self-adjoint for all $n\in\bN$ by replacing $x_n$ by $(x_n+x_n^*)/2$ and doing the same for $y_n$.

\begin{lem}
\label{lemma:fd-subalg}
    For $\cU$-almost all $n$, the $\Cstar$-algebra $\Cstar(x_n)$ generated by $x_n$ contains a unital copy of $\bC^3$ and $\Cstar(y_n)\neq\bC$.
\end{lem}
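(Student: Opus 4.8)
The plan is to transfer spectral/structural information about the limit elements $x$ and $y$ back to the representative sequences using the fact that they are already established to be free, so that the key properties follow from convergence in the ultraproduct rather than from freeness itself.

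The plan is to prove this lemma purely by transferring spectral and structural information from the limits $x=x_\infty$ and $y$ back to $\cU$-almost all of the representatives $x_n,y_n$; note that freeness of the pairs plays no role here and is only needed later to derive the final contradiction. The single analytic input I would use is that continuous functional calculus commutes with the $\Cstar$-ultraproduct on self-adjoint elements: for any $f\in C([-1,1])$ one has $f(x)=[f(x_n)]_\cU$ inside $\cQ_\cU$, and hence $\norm{f(x)}=\lim_{\cU}\norm{f(x_n)}$. This follows by approximating $f$ uniformly by polynomials on $[-1,1]$ (which contains every spectrum in sight, as all elements lie in the unit ball) and using that polynomials manifestly commute with the quotient map.

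For the copy of $\bC^3$, write $\sigma(x_\infty)=\{\lambda_1,\lambda_2,\lambda_3\}$ and fix $\epsilon>0$ small enough that the intervals $I_i=(\lambda_i-\epsilon,\lambda_i+\epsilon)$ are pairwise disjoint. I would run two localization arguments. First, choose $g\in C([-1,1])$ with $0\le g\le 1$ that vanishes on a neighbourhood of $\{\lambda_1,\lambda_2,\lambda_3\}$ and equals $1$ off $\bigcup_i I_i$; then $g(x_\infty)=0$, so $\lim_\cU\norm{g(x_n)}=0$ and for $\cU$-almost all $n$ we get $\norm{g(x_n)}<1$, which via $\norm{g(x_n)}=\max_{t\in\sigma(x_n)}\lvert g(t)\rvert$ forces $\sigma(x_n)\subseteq\bigcup_i I_i$. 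Second, choose bump functions $f_i$ supported in $I_i$ with $f_i(\lambda_i)=1$, so that $f_i(x_\infty)$ is the nonzero spectral projection $p_i$ and $\lim_\cU\norm{f_i(x_n)}=\norm{p_i}=1$; hence $\sigma(x_n)\cap I_i\neq\emptyset$ for $\cU$-almost all $n$. Intersecting these finitely many $\cU$-large sets, for $\cU$-almost all $n$ the spectrum $\sigma(x_n)$ is contained in three disjoint intervals and meets each of them. The indicator functions of the three clopen pieces $\sigma(x_n)\cap I_i$ are then three nonzero orthogonal projections summing to $1$ inside $\Cstar(x_n)\cong C(\sigma(x_n))$, which is exactly a unital copy of $\bC^3$.

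For the claim $\Cstar(y_n)\neq\bC$, I would argue contrapositively: if $y_n$ were a scalar $c_n\cdot 1$ for a $\cU$-large set of $n$, then boundedness of $(c_n)$ gives $c=\lim_\cU c_n$ and $\norm{y-c\cdot 1}=\lim_\cU\lvert c_n-c\rvert=0$, contradicting $y\notin\bC$. Hence $y_n\notin\bC\cdot 1$, i.e. $\Cstar(y_n)\neq\bC$, for $\cU$-almost all $n$. The only genuinely delicate step is the spectral localization, namely making precise that norms under functional calculus pass to the ultraproduct limit; once that is in hand, everything reduces to bookkeeping with finitely many $\cU$-large sets.
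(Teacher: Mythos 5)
Your proof is correct and follows essentially the same route as the paper: both arguments exploit that $[x_n]_\cU=[x_\infty]_\cU$ forces $\lim_\cU\|x_n-x_\infty\|=0$ and then transfer the three-point spectrum of $x_\infty$ to $\sigma(x_n)$ for $\cU$-almost all $n$, splitting $\sigma(x_n)$ into three nonempty clopen pieces to obtain the unital copy of $\bC^3$, and both dispose of the claim $\Cstar(y_n)\neq\bC$ by noting that a scalar representative sequence would force $y$ to be scalar. The only difference is one of packaging: the paper cites the standard Hausdorff-continuity of the spectrum of self-adjoint elements under norm convergence, whereas you reprove exactly that localization by hand, via compatibility of continuous functional calculus with the ultraproduct and two families of bump functions, which is a more self-contained but mathematically equivalent argument.
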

\begin{proof}
    Note that $\lim_\cU x_n=x_\infty$ in norm and so $\sigma(x_n)$ converges to $\sigma(x_\infty)$ in the Hausdorff distance along the ultrafilter $\cU$. Consequently, for $\cU$-almost all $n$, the spectrum $\sigma(x_n)$ will have at least three connected components and hence $\bC^3$ embeds into $\Cstar(x_n)$. Notice also that for $\cU$-almost all $n$, the spectrum of $y_n$ is not a singleton since $y$ is not a scalar.
\end{proof}

Passing to the GNS-closure of $\cQ$ with respect to its unique trace $\tau$ (which is isomorphic to the hyperfinite II$_1$ factor $\cR$), we have constructed two von Neumann subalgebras $A=\Wstar(x_n)$ and $B=\Wstar(y_n)$ which are free with respect to the unique trace $\tau$ on $\cR$. Moreover, we have arranged that $\bC^3$ embeds into $A$, and $B$ is not the complex numbers. In particular, we can find an isomorphic copy of $\bC^2$ in $B$. 

We have found an embedding of the von Neumann-algebraic free product
\begin{equation}
    (\bC^3,\tau|_A)*(\bC^2,\tau|_B)\subseteq\cR\,.
\end{equation}
The traces on the left hand side are faithful as they are restrictions of the faithful trace $\tau$. Consequently, the left hand side is non-amenable, which is our desired contradiction. We include a sketch of the last well-known assertion for the convenience of the reader.

\begin{lem}
    The free product of the tracial von Neumann algebras $(\bC^3,\tau_1)$ and $(\bC^2,\tau_2)$ is non-amenable whenever the traces $\tau_1, \tau_2$ are faithful.
\end{lem}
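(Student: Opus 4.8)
The plan is to invoke the classification of free products of finite-dimensional tracial von Neumann algebras, due to Dykema \cite{dykema1994interpolated}, and reduce the assertion to the non-amenability of an interpolated free group factor. First I would record the clean special case where both $\tau_1$ and $\tau_2$ are the uniform traces: then $(\bC^3,\tau_1)$ and $(\bC^2,\tau_2)$ are the group von Neumann algebras $L(\mathbb{Z}_3)$ and $L(\mathbb{Z}_2)$, so their free product is $L(\mathbb{Z}_3 * \mathbb{Z}_2)=L(PSL_2(\mathbb{Z}))$. Since $PSL_2(\mathbb{Z})$ contains a non-abelian free subgroup it is non-amenable, hence so is its group von Neumann algebra, which by Connes' theorem \cite{connes1976classification} fails to be hyperfinite.

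For arbitrary faithful traces the group-theoretic picture is unavailable, and here I would appeal to Dykema's structure theorem, which expresses any such free product as a direct sum $L(\bF_t)\oplus D$ of an interpolated free group factor and a finite-dimensional algebra $D$, with $t$ governed by the free dimension, which is additive over the free product. In terms of the atom weights $\lambda_i,\mu_j$ one has $\mathrm{fdim}(\bC^3,\tau_1)=1-\sum_i \lambda_i^2$ and $\mathrm{fdim}(\bC^2,\tau_2)=1-\sum_j \mu_j^2$. The goal is then to verify that the summand $L(\bF_t)$ is a genuine $\twoone$ factor, equivalently $t>1$; this is precisely the non-hyperfinite, non-amenable piece, and the presence of any non-amenable direct summand forces the whole algebra to be non-amenable.

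The main obstacle is exactly the dependence on skewed traces. When an atom $p$ of $\bC^3$ and an atom $q$ of $\bC^2$ satisfy $\tau_1(p)+\tau_2(q)>1$, the two projections cannot be placed in general position and a finite-dimensional summand splits off into $D$, so the naive free dimension can drop below $1$. What rescues the argument is the three-dimensionality of $\bC^3$: even after the largest atoms collide, the remaining two atoms of $\bC^3$ stay free from $\bC^2$ and cannot be absorbed into finite-dimensional summands, so a diffuse free-product corner, and with it an interpolated free group factor of parameter $>1$, always survives. (This is also why $\bC^2$ in place of $\bC^3$ fails: $\bC^2 * \bC^2$ is generated by two projections, hence is type I and amenable.) I would make this precise either by tracking Dykema's decomposition directly, or more robustly by exhibiting a projection $e$ for which the compression $e(\bC^3 * \bC^2)e$ is again a free product in the factorial regime $t>1$; since amenability passes to corners, non-amenability of a single corner suffices. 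The remaining free-dimension bookkeeping and the verification that $t>1$ are then routine once the surviving diffuse part has been located.
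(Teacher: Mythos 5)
Your proposal takes essentially the same route as the paper's own sketch: both invoke Dykema's structure theorem to write $(\bC^3,\tau_1)*(\bC^2,\tau_2)$ as an interpolated free group factor $L(\bF_s)$ direct sum a finite-dimensional algebra, and reduce non-amenability to checking, via free-dimension bookkeeping, that faithfulness of the traces forces a genuine free group factor summand with $s>1$. Your additional observations (the uniform-trace case via $L(PSL_2(\mathbb{Z}))$, the atom-collision caveat, and why $\bC^2 * \bC^2$ fails) are sound elaborations of that same argument rather than a different method.
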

\begin{proof}[Sketch of proof]
    Using Dykema's \cite[Theorem 2.3]{dykema}, one can express the free product as a direct sum of an interpolated free group factor $L(\bF_s)$ (see \cite{dykema1994interpolated}) and a finite dimensional commutative von Neumann algebra. A calculation 
    then shows that faithfulness of the traces implies that $s>1$ and so $(A,\tau|_A)*(B,\tau|B)$ is non-amenable since it contains the non-amenable factor $L(\bF_s)$. 
\end{proof}

\subsection{Proof for tracial von Neumann algebras}

The proof in the von Neumann setting is nearly identical - we use Popa's asymptotic freeness result for $\cR$ \cite{popa} instead of selflessness of $\cQ$ and have to work a bit harder to reach the conclusion of Lemma \ref{lemma:fd-subalg}. We yet again choose $x_\infty\in (\cR)_1$ self-adjoint with $\sigma(x_\infty)$ consisting of three points and choose $y\in(\cR^\cU)_1\setminus\bC$ self-adjoint and free from the constant sequence $[x_\infty,\dots]$ using \cite{popa}. Assuming for a contradiction that freeness is definable, so that $X(\mathcal{R}^\cU) = X(\mathcal{R})^\cU$, choose representative sequences $(x_n)\subset \cR$ and $(y_n)\subset\cR$ such that $x_n$ and $y_n$ are free, and $[x_n]_\cU=x$ and $[y_n]_\cU=y$ inside $\cR^\cU$. As before, we may further assume that $x_n,y_n$ are self-adjoint for all $n\in\bN$.

\begin{lem}
    For $\cU$-almost all $n$, the von Neumann algebra $\Wstar(x_n)$ generated by $x_n$ contains a unital copy of $\bC^3$ and $\Wstar(y_n)\neq\bC$.
\end{lem}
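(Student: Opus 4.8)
The plan is to follow the proof of Lemma \ref{lemma:fd-subalg}, but the identification $[x_n]_\cU=x$ now only gives convergence in $\norm{\cdot}_2$ rather than in operator norm, so the spectra $\sigma(x_n)$ need not converge to $\sigma(x_\infty)$ in Hausdorff distance and the ``connected components'' argument is no longer available. Instead I would detect the three-dimensionality of the abelian algebra $\Wstar(x_n)$ through moment data, which is exactly the information that survives in the von Neumann ultrapower. First I would record moment convergence: since $x_n,x_\infty\in(\cR)_1$ sit in the operator-norm unit ball, $(x_n^k)$ is a legitimate representative with $x^k=[x_n^k]_\cU$, and as $x=[x_\infty,\dots]$ is constant the trace on $\cR^\cU$ yields $\lim_\cU\tau(x_n^k)=\tau_\cU(x^k)=\tau(x_\infty^k)$ for every $k\in\bN$. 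The unit-ball constraint built into the definition of $X$ is precisely what makes powers pass to the ultrapower, so this step is where that hypothesis is used.

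The key observation is that for a self-adjoint $z$ in a tracial von Neumann algebra, $\Wstar(z)\cong L^\infty(\mu_z)$ is abelian, and $1,z,z^2$ are linearly independent in $L^2(\tau)$ exactly when $\mu_z$ is supported on at least three points, i.e.\ when $\dim\Wstar(z)\ge 3$; any abelian von Neumann algebra of dimension at least three contains a unital copy of $\bC^3$. Linear independence of $1,z,z^2$ is in turn equivalent to positivity of the determinant of the $3\times 3$ Gram (moment) matrix $G(z)=\left(\tau(z^{i+j})\right)_{i,j=0}^{2}$, which is a fixed polynomial in $\tau(z),\dots,\tau(z^4)$.

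I would then conclude as follows. Because $x_\infty$ has three-point spectrum and $\tau$ is faithful, its three spectral projections have strictly positive trace, so $\dim\Wstar(x_\infty)=3$ and $\det G(x_\infty)>0$. Moment convergence gives $\lim_\cU\det G(x_n)=\det G(x_\infty)>0$, so $\det G(x_n)>0$ and hence $\bC^3\hookrightarrow\Wstar(x_n)$ for $\cU$-almost all $n$. The claim $\Wstar(y_n)\neq\bC$ is the same argument one dimension lower, using the variance $\tau(y^2)-\tau(y)^2>0$ (valid since $y\notin\bC$) together with $\lim_\cU\bigl(\tau(y_n^2)-\tau(y_n)^2\bigr)=\tau_\cU(y^2)-\tau_\cU(y)^2>0$, which forces $y_n$ to be non-scalar for $\cU$-almost all $n$.

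The main obstacle is conceptual rather than computational: in the von Neumann ultrapower only $2$-norm (hence moment) information is retained, so every spectral feature used must be encoded through moments. The Gram, equivalently Hankel, determinant criterion packages the relevant feature --- the number of atoms of $\mu_z$, equivalently $\dim\Wstar(z)$ --- as a polynomial in the moments, making it continuous along $\cU$; this is the device that replaces the Hausdorff continuity of spectra used in the $\Cstar$ case.
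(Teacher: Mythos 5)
Your proof is correct, but it takes a genuinely different route from the paper's. The paper detects the three spectral points via Lipschitz functional calculus: citing Kittaneh's inequality $\norm{f(a)-f(b)}_2\leq k\norm{a-b}_2$ for normal operators, it fixes disjointly supported bump functions $f_1,f_2,f_3$ around the three points of $\sigma(x_\infty)$ and deduces from $\lim_\cU\norm{x_n-x_\infty}_2=0$ that $\tau(f_i(x_n))\to\tau(f_i(x_\infty))\neq 0$, so that $\sigma(x_n)$ must meet the support of each $f_i$ for $\cU$-almost all $n$. You instead use only polynomial test functions: the identity $[x_n^k]_\cU=x^k$ (valid because the representatives are uniformly norm-bounded) gives $\lim_\cU\tau(x_n^k)=\tau(x_\infty^k)$, and positivity of the Gram--Hankel determinant $\det\left(\tau(z^{i+j})\right)_{i,j=0}^{2}$, a polynomial in the first five moments, characterizes when the spectral distribution of a self-adjoint $z$ has at least three support points, hence when $\bC^3$ embeds unitally into $\Wstar(z)$. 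Both arguments rest on faithfulness of $\tau$ (to know the three atoms of $\mu_{x_\infty}$ carry positive mass) and end with the same implicit step from three support points to a unital copy of $\bC^3$. As for what each approach buys: yours is more self-contained, replacing the external citation of Kittaneh's Lipschitz estimate with elementary facts about ultrapower traces and Gram determinants; the paper's yields slightly more localized information (the spectrum of $x_n$ intersects any prescribed neighbourhood of each $\lambda_i$), though that extra precision is not needed for the lemma. Your variance computation for $y_n$ is likewise a clean quantitative rendering of the paper's one-line remark that $\Wstar(y_n)\neq\bC$ follows from $y$ not being a scalar.
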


\begin{proof}
    We use \cite[Corollary 2]{kittaneh} which states that for two normal Hilbert-Schmidt operators $a,b$ and any Lipschitz function $f\in C(\sigma(a)\cup\sigma(b))$ with Lipschitz constant $k$, we have the inequality
    \begin{equation}
        \norm{f(a)-f(b)}_2\leq k\norm{a-b}_2\,.
    \end{equation}

    We wish to apply the result to the sequence $x_n$ which converges to $x_\infty$ in the 2-norm. Since $x_n$ is bounded we can choose large enough $r\in\bR_+$ such that $\sigma(x_n)\cup \sigma(x_\infty)$ is contained in the disk of radius $r$. 
    Say $\sigma(x_\infty)=\{\lambda_1, \lambda_2, \lambda_3\}$ and fix disjointly supported Lipschitz functions $f_1,f_2,f_3\in C(\bR,\bR_+)$ such that $f_i$ is only supported on a small neighbourhood of $\lambda_i$.
    As a consequence of 2-norm convergence, we then have for each $i$,
    \begin{equation}
        \begin{split}
        \int_{\sigma(x_n)} f_i\diff\mu_{x_n}&=\tau(f_i(x_n)) \\
        &\ra \tau( f_i(x_\infty))
        =\int_{\sigma(x_\infty)} f_i\diff\mu_{x_\infty}>0\,,
    \end{split}
    \end{equation}
    where the measures $\mu_{x_n}$ are the measures on $L^\infty(\sigma(x_n))$ induced by $\tau$.
    In particular we see that for $\cU$-almost all $n$, the support of $f_i$ intersects $\sigma(x_n)$ non-trivially and so the element $x_n$ must have at least three distinct points in its spectrum.

    The fact that $\Wstar(y_n)\neq\bC$ follows again from $y$ not being a scalar.
\end{proof}

The rest of the proof is analogous to the proof given in Section \ref{sec:Cstar}.

\begin{remark}
    The same proof also shows that freeness is not definable, even when relativised to the theory of II$_1$ factors.
\end{remark}

\subsection*{Acknowledgements}
    This work occurred during a 6-week summer undergraduate research project. We thank Trinity College, the Mathematical Institute at Oxford, and Magdalen College for funding WB, EH, and YL respectively. JC and JP were supported by the Engineering and Physical Sciences Research Council (EP/X026647/1).
    
    We gladly thank Austin Shiner for numerous useful discussions and references for this work. We also thank Mira Tartarotti for helpful comments improving the exposition.

\printbibliography

@article{goldbringDefinableUrysohn,
  title={Definable functions in Urysohn’s metric space},
  author={Goldbring, Isaac},
  journal={Illinois Journal of Mathematics},
  volume={55},
  number={4},
  pages={1423--1435},
  year={2011},
  publisher={Duke University Press}
}

@article{goldbringDefinableHilbert,
  title={Definable Operators on Hilbert Spaces},
  author={Goldbring, Isaac},
  journal={Notre Dame Journal of Formal Logic},
  volume={53},
  number={2},
  year={2012}
}

@misc{goldbring2025definability-tensornorms,
      title={On definability of C*-tensor norms}, 
      author={Isaac Goldbring and Thomas Sinclair},
      year={2025},
      eprint={2509.15086},
      archivePrefix={arXiv},
      primaryClass={math.OA},
      url={https://arxiv.org/abs/2509.15086}, 
}

@article{connes1976classification,
  title={Classification of injective factors cases II$_1$, II$_\infty$, III$_\lambda$, $\lambda \neq 1$},
  author={Connes, Alain},
  journal={Annals of Mathematics},
  volume={104},
  number={1},
  pages={73--115},
  year={1976},
  publisher={JSTOR}
}

@article{PillayModelTheory,
  title={Model Theory},
  author={Pillay, Anand},
  journal={Notices of the American Mathematical Society},
  volume={47},
  number={11},
  year={2000},
  pages={1374--1381}
}

@incollection{goldbring2023spectral,
  title={Spectral gap and definability},
  author={Goldbring, Isaac},
  booktitle={Beyond First Order Model Theory, Volume II},
  pages={103--138},
  year={2023},
  publisher={Chapman and Hall/CRC}
}

@article{dykema1994interpolated,
  title={Interpolated free group factors},
  author={Dykema, Kenneth},
  journal={Pacific Journal of Mathematics},
  volume={163},
  number={1},
  pages={123--135},
  year={1994},
  publisher={Mathematical Sciences Publishers}
}

@article{radulescu1992fundamental,
  title={The fundamental group of the von Neumann algebra of a free group with infinitely many generators is $\mathbb{R}_+$},
  author={R\v adulescu, Florin},
  journal={Journal of the American Mathematical Society},
  volume={5},
  number={3},
  pages={517--517},
  year={1992},
  publisher={American Mathematical Society}
}

@article{haagerup2005new,
  title={A new application of random matrices: {$\Ext(\mathrm{C}^*_{\mathrm{red}}(\mathbf{F}_2))$} is not a group},
  author={Haagerup, Uffe and Thorbj{\o}rnsen, Steen},
  journal={Annals of Mathematics},
  pages={711--775},
  year={2005},
  publisher={JSTOR}
}

@article{amrutam2025strict,
  title={Strict comparison in reduced group C$^*$-algebras},
  author={Amrutam, Tattwamasi and Gao, David and Kunnawalkam Elayavalli, Srivatsav and Patchell, Gregory},
  journal={Inventiones mathematicae},
  volume={242},
  number={3},
  pages={639--657},
  year={2025},
  publisher={Springer}
}

@book{farah2021model,
  title={Model Theory of {C}$^*$-Algebras},
  author={Farah, Ilijas and Hart, Bradd and Lupini, Martino and Robert, Leonel and Tikuisis, Aaron and Vignati, Alessandro and Winter, Wilhelm},
  volume={271},
  year={2021},
  publisher={American Mathematical Society}
}

@article{hart2023introduction,
  title={An introduction to continuous model theory},
  author={Hart, Bradd},
  journal={Model Theory of Operator Algebras},
  volume={11},
  pages={83},
  year={2023},
  publisher={Walter de Gruyter GmbH \& Co KG}
}

@article {dykema,
    AUTHOR = {Dykema, Kenneth},
     TITLE = {Free products of hyperfinite von {N}eumann algebras and free
              dimension},
   JOURNAL = {Duke Math. J.},
  FJOURNAL = {Duke Mathematical Journal},
    VOLUME = {69},
      YEAR = {1993},
    NUMBER = {1},
     PAGES = {97--119},
   MRCLASS = {46L35},
  MRNUMBER = {1201693},
MRREVIEWER = {Marie\ Choda},
}

@article {robert,
    AUTHOR = {Robert, Leonel},
     TITLE = {Selfless {${\rm C}^*$}-algebras},
   JOURNAL = {Adv. Math.},
  FJOURNAL = {Advances in Mathematics},
    VOLUME = {478},
      YEAR = {2025},
     PAGES = {Paper No. 110409, 28},
   MRCLASS = {46L05 (46L35 46L54)},
  MRNUMBER = {4924062},
}

@book{voiculescu1992free,
  title={Free random variables},
  author={Voiculescu, Dan V. and Dykema, Kenneth J. and Nica, Alexandru},
  volume={1},
  year={1992},
  publisher={American Mathematical Soc.}
}

@incollection {popa,
    AUTHOR = {Popa, Sorin},
     TITLE = {Free-independent sequences in type {${\rm II}_1$} factors and
              related problems},
      NOTE = {Recent advances in operator algebras (Orl\'eans, 1992)},
   JOURNAL = {Ast\'erisque},
  FJOURNAL = {Ast\'erisque},
    NUMBER = {232},
      YEAR = {1995},
     PAGES = {187--202},
      ISSN = {0303-1179,2492-5926},
   MRCLASS = {46L50 (46L35)},
  MRNUMBER = {1372533},
MRREVIEWER = {Yasuyuki\ Kawahigashi},
}

@article {kittaneh,
    AUTHOR = {Kittaneh, Fuad},
     TITLE = {On {L}ipschitz functions of normal operators},
   JOURNAL = {Proc. Amer. Math. Soc.},
  FJOURNAL = {Proceedings of the American Mathematical Society},
    VOLUME = {94},
      YEAR = {1985},
    NUMBER = {3},
     PAGES = {416--418},
   MRCLASS = {47B15 (47A60)},
  MRNUMBER = {787884},
}

\end{document}